\theoremstyle{plain}
\numberwithin{equation}{section}
\newtheorem{theorem}{Theorem}[section]
\theoremstyle{definition}
\newtheorem{definition}[theorem]{Definition}
\theoremstyle{plain}
\newtheorem{proposition}[theorem]{Proposition}
\theoremstyle{plain}
\newtheorem{lemma}[theorem]{Lemma}
\theoremstyle{plain}
\newtheorem{corollary}[theorem]{Corollary}
\theoremstyle{definition}
\theoremstyle{remark}
\theoremstyle{remark}
\newcommand{\numberset}{\mathbb}
\newcommand{\N}{\numberset{N}}
\DeclareMathOperator{\C}{C}
\DeclareMathOperator{\Def}{Def}
\newcommand{\ab}[1]{{\mathbf{#1}}}
\newcommand{\algop}[2]{( {#1}, {#2} )}
\newcommand{\ari}[1]{^{[\,#1\,]}}
\newcommand{\arii}[1]{^{[#1]}}
\title[Clonoids in universal algebraic geometry]{A clonoid based approach to
  some finiteness results in universal algebraic geometry}
\author{Erhard Aichinger}
\address{
Institut für Algebra,
Johannes Kepler Universit\"at Linz, Altenberger Strasse 69, 4040 Linz,
Austria}
\email{erhard@algebra.uni-linz.ac.at}
\author{Bernardo Rossi}
\address{Dipartimento di Ingegneria dell'informazione e scienze matematiche,
  Universit\`a degli Studi di Siena,
  San Niccol\`o, Via Roma 56, 53100 Siena, Italy}
\address{
Institut für Algebra,
Johannes Kepler Universit\"at Linz, Altenberger Strasse 69, 4040 Linz,
Austria}
\email{bernardo.rossi@student.unisi.it}
\subjclass[2010]{08B05,03C05}
\keywords{Universal algebraic geometry, definable sets, clonoids}
\thanks{Supported by the Austrian Science Fund (FWF):~P29931.}
\begin{document}
\maketitle
\begin{abstract}
  We prove that for a finite first order structure $\mathbf{A}$ and a set of first order formulas $\Phi$ in its language with certain closure properties, the finitary relations on $A$ that are definable via formulas in $\Phi$ are uniquely determined by those of arity $|A|^{2}$. This yields
  new proofs for some finiteness results from universal algebraic geometry.
 \end{abstract}
\section{Introduction}
To every algebraic structure $\ab{A}$, one can associate certain
subsets of
its finite direct powers $\ab{A}^n$ ($n \in \N$). For example, the \emph{relational clone} of $\ab{A}$
consists of all subuniverses of these direct powers. Two algebras defined on the same base
set $A$, but possibly with different basic operations, may have the same relational clone: if
$A$ is finite, this happens if and only if the two algebras are \emph{term equivalent}, i.e.,
each fundamental operation of one algebra is a term operation of the other algebra
\cite[p. 55, Folgerung~1.2.4]{PK:FUR}.
Universal algebraic geometry associates with $\ab{A}$ all solution sets $S$ of (possibly infinite)
systems of algebraic equations of the form
$S = \{ (a_1, \ldots, a_n) \in A^n \mid  f_i (a_1, \ldots, x_n) = g_i (a_1, \ldots, a_n) \text{ for all }
i \in I\}$, where $f_i, g_i$ are term operations from $\ab{A}$.
We will call such a solution set $S$
an \emph{algebraic set}. Following A.\ G.\ Pinus \cite[p.\ 501]{pinus17},
two algebras defined on the same base set $A$ are called \emph{algebraically equivalent} if
they have the same algebraic sets. An algebra is called an \emph{equational domain} if for
all $n \in \N$ and all algebraic subsets $S,T$ of $A^n$, the union $S \cup T$ is again algebraic.
Using a description of algebraic equivalence through certain invariants, Pinus proved
that on a finite set, there are at most finitely many algebraically inequivalent equational
domains \cite[Theorem~3]{pinus17}. In the present note, we observe that this finiteness can also
be obtained by considering the clonoid of the characteristic functions of algebraic sets
and applying a consequence of the Baker Pixley Theorem \cite[Theorem~2.1]{BP:PIAT} that
was recently proved by A.\ Sparks \cite[Theorem~2.1]{Sp:OTNO} to these clonoids.

\section{Definable sets}

We note that the solutions of a term equation $s(x_1, \ldots, x_n) = t (x_1, \ldots, x_n)$
in $n$ variables over the algebra $\ab{A}$ can be written in the form
$S = \{ (a_1, \ldots, a_n) \in A^n \mid \ab{A} \models \varphi (a_1, \ldots, a_n)\}$,
where $\varphi$ is a logical formula of the form $s \approx t$ with free 
variables $x_1, \ldots, x_n$, and $\varphi (a_1, \ldots, a_n)$ is the interpretation
of this formula in $\ab{A}$ with the variable assignment $x_i \mapsto a_i$. 
We put this into a more flexible frame allowing for arbitrary first order structures
with both functional and relational symbols.
By a \emph{first order formula}, we always understand a formula in first order logic
with
equality, denoted by $\approx$, over the set of variables $\{x_i \mid i \in \N\}$.

\begin{definition}%
  Let $\mathbf{A}=(A,  (f_{i})_{i\in I}, (\rho_{j})_{j\in J})$ be a first order structure,
  let $\Phi$ be a set of first order formulas in the language of $\mathbf{A}$,
  let $n \in \N$, and
  let $B \subseteq A^n$. $B$ is called \emph{$\Phi$-definable} if
  there is a formula $\varphi \in \Phi$ whose free variables are all contained in
  $\{x_{1},\dots, x_{n}\}$ such that
   \[
       B= \{(a_{1},\dots,a_{n})\in A^{n} \mid \ab{A}\models \varphi(a_{1},\dots,a_{n})\}.
   \]
   We define $\Def^{[n]}(\mathbf{A},\Phi)$ to be the set of all $\Phi$-definable subsets of $A^{n}$,
   and we set $\Def(\mathbf{A},\Phi):= \bigcup_{n \in \N} \Def^{[n]}(\mathbf{A},\Phi)$.
\end{definition}
For example, given a finite relational structure $\ab{A} = \algop{A}{(\rho_i)_{i \in I}}$,
and taking $\Phi$ to be the set of primitive positive formulas in the language
of $\ab{A}$
(allowing also the the binary equality symbol $\approx$), $\Def (\ab{A}, \Phi)$ is the
relational clone generated by $\{\rho_i \mid i \in I\}$ \cite[Hauptsatz~2.1.3(i)]{PK:FUR}. 
Other examples for the operator $\Def$ come from Universal Algebraic Geometry
(cf. \cite{Pi:ASOU, DMR:AGOA}).
 Let $\mathbf{A}=\algop{A}{(f_i)_{i \in I}}$ be a finite algebraic structure,
  and let $\Phi$ consist of all (finite) conjunctions of atomic formulas
  in the language of $\ab{A}$. Then each formula in $\Phi$ is of the form
  $\bigwedge_{i=1}^m s_i (x_1, \ldots, x_n) \approx t_i (x_1, \ldots, x_n)$, where $m, n \in \N$
  and
  all $s_i$ and $t_i$ are terms in the language of $\ab{A}$, and hence $\Def(\mathbf{A},\Phi)$ consists of all 
  algebraic sets of $\ab{A}$. Another collection of finitary relations
  on an algebra $\ab{A}$ that can be expressed in this setting is 
  the \emph{$L_0$-logical geometry of $\mathbf{A}$}, which was studied,
  e. g., in \cite{pinus17logeq}. This $L_0$-logical geometry is
  $\Def (\mathbf{A}, \Phi')$, where $\Phi'$ is the set of all quantifier
  free formulas of $\ab{A}$.
  Two algebras $\ab{A}_1$ and $\ab{A}_2$ defined on the same universe
  are called \emph{$L_0$-logically
    equivalent} if $\Def(\mathbf{A}_{1}, \Phi_1') = \Def (\mathbf{A}_{2}, \Phi_2')$, where
  $\Phi_i'$ is the set of quantifier free formulas in the language of $\ab{A}_i$.
 
  All sets of formulas that we have considered so far were closed under \emph{taking minors}. To define this concept, we
  use the operation of \emph{substitution} as defined in
  \cite[Definition~8.2]{EF:EIDM}.
  We call a first order formula $\varphi$ a \emph{minor of the formula} $\varphi'$ if
  there is an $n \in \N$ and a mapping $\sigma : \{1,\ldots, n\} \to \N$ such that
  $\varphi = \varphi'\frac{x_{\sigma(1)},\dots,x_{\sigma(n)}}{x_{1},\dots, x_{n}}$;
  in other words,
  $\varphi$ is the result of substituting $x_1, \ldots, x_n$ in $\varphi'$ simultaneously
  by $x_{\sigma(1)}, \ldots, x_{\sigma(n)}$, sometimes denoted
  by $\varphi = \varphi' (x_{\sigma(1)}, \ldots, x_{\sigma(n)})$.
  A function $f:A^{m}\rightarrow B$ is a \emph{minor of the function} $f':A^{n}\rightarrow B$ if there
  exists $\sigma:\{1,\dots,n\}\rightarrow \{1,\dots, m\}$ such that
  $f(x_{1},\dots,x_{m})=f'(x_{\sigma(1)},\dots,x_{\sigma(n)})$ for all $x_1, \ldots, x_m \in A$.
  Finally, a set $B\subseteq A^{m}$ is a \emph{minor of the set} $B'\subseteq A^{n}$
  if there exists $\sigma:\{1,\dots,n\}\rightarrow \{1,\dots, m\}$ such that
  $B=\{ (a_{1},\dots,a_{m})\in A^{m}\mid (a_{\sigma(1)},\dots,a_{\sigma(n)})\in B'\}$.
  We say that a subset $\mathcal{R}$ of the set $\bigcup_{n \in \N} \mathcal{P} (A^n)$ of all
  finitary relations on $A$ is \emph{closed under
  finite intersections} if for all $m \in \N$ and for all $S, T \subseteq A^m$ with
  $S \in \mathcal{R}$ and $T \in \mathcal{R}$, we have $S \cap T \in \mathcal{R}$; being
  \emph{closed under
  finite unions} is defined similarly.
  Using the substitution lemma for first order logic
  \cite[Substitutionslemma~8.3]{EF:EIDM}, we obtain:
\begin{proposition}\label{prop:connection_M_Sc}
Let $\mathbf{A}$ be a first order structure, and let $\Phi$ be a set of first order formulas in its language closed under $\wedge$, $\vee$, and taking minors of formulas. Then $\Def(\mathbf{A},\Phi)$ is closed under finite intersections, finite unions, and taking minors of sets.
\end{proposition}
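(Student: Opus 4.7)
The plan is to verify each of the three closure properties directly by exhibiting a defining formula in $\Phi$ for the target set.

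For intersections and unions, I would start with $S, T \in \Def^{[m]}(\ab{A}, \Phi)$ and pick witnessing formulas $\varphi_S, \varphi_T \in \Phi$ whose free variables are contained in $\{x_1, \ldots, x_m\}$. Since the free variables of $\varphi_S \wedge \varphi_T$ and of $\varphi_S \vee \varphi_T$ remain inside $\{x_1, \ldots, x_m\}$, and since $\Phi$ is assumed closed under $\wedge$ and $\vee$, both of these formulas lie in $\Phi$. A straightforward appeal to the definition of satisfaction then shows they define $S \cap T$ and $S \cup T$ respectively, yielding the first two closure properties.

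For closure under taking minors of sets, suppose $B' \in \Def^{[n]}(\ab{A}, \Phi)$ is defined by $\varphi' \in \Phi$ and $B \subseteq A^m$ is the minor of $B'$ under a map $\sigma : \{1, \ldots, n\} \to \{1, \ldots, m\}$. The candidate defining formula is $\varphi := \varphi'\frac{x_{\sigma(1)},\dots,x_{\sigma(n)}}{x_{1},\dots, x_{n}}$, which is a minor of $\varphi'$ in the sense of the paper's definition and hence belongs to $\Phi$ by hypothesis. To verify that $\varphi$ defines $B$, I would invoke the substitution lemma for first order logic \cite[Substitutionslemma~8.3]{EF:EIDM} cited just before the proposition, which guarantees that for every $(a_1, \ldots, a_m) \in A^m$ one has $\ab{A} \models \varphi(a_1, \ldots, a_m)$ if and only if $\ab{A} \models \varphi'(a_{\sigma(1)}, \ldots, a_{\sigma(n)})$; by the definition of $B'$ this is equivalent to $(a_{\sigma(1)}, \ldots, a_{\sigma(n)}) \in B'$, i.e.\ to $(a_1, \ldots, a_m) \in B$.

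There is no substantive obstacle here; the proposition is essentially a bookkeeping translation between syntactic operations on $\Phi$ and set-theoretic operations on the definable sets. The only subtlety to watch out for is the management of free variables: when combining $\varphi_S$ and $\varphi_T$ one must ensure that both are written with free variables in the common set $\{x_1, \ldots, x_m\}$ (which is allowed since extra free variables may be introduced without changing the defined set), and in the minor argument one must appeal to the substitution lemma in the exact form that allows simultaneous substitution, which is precisely what the paper's definition of minor provides.
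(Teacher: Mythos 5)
Your proof is correct and follows exactly the route the paper intends: the paper states this proposition without a written proof, merely noting that it follows from the substitution lemma \cite[Substitutionslemma~8.3]{EF:EIDM}, and your argument is precisely the bookkeeping that remark leaves implicit. Nothing to add.
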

For a subset $T$ of $A^n$, we define its \emph{characteristic function}
$\mathbf{1}_{T}\colon A^{n}\rightarrow\{0,1\}$ by $\mathbf{1}_T (x_1, \ldots, x_n) = 1$ if
$(x_1, \ldots, x_n) \in T$, and $\mathbf{1}_T (x_1, \ldots, x_n) = 0$ if $(x_1, \ldots, x_n) \not\in T$.
Let $A$ be a set, and let $\ab{B}$ be an algebra. Following \cite{AM:FGEC},
a subset $C$ of $\bigcup_{n \in \N} B^{A^n}$ is called a \emph{clonoid} from $A$ to $\ab{B}$ if
$C$ is closed under taking minors of functions, and for every $k \in \N$, the set $C^{[k]} := C \cap B^{A^k}$ is a subuniverse
of $\ab{B}^{A^k}$. If a set $S$ is a minor of the set $T$, then
its characteristic function $\mathbf{1}_S$ is a minor of the function $\mathbf{1}_T$.
Hence we have:
\begin{proposition}\label{prop:closed_under_Sc_union_intersection_implies_clonoid}
  Let $\mathbf{A}$ be a finite set, and let $\mathcal{R}$ be a set of finitary relations on $A$
  that is closed under finite intersections, finite unions, and under taking minors of sets.
  Then the set $\C(\mathcal{R}):=\{\mathbf{1}_{T}\mid T\in \mathcal{R}\}$ is a clonoid
  from $A$ to the two element lattice $(\{0,1\},\wedge,\vee)$.
\end{proposition}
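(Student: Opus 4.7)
The plan is to verify the two defining conditions for $\C(\mathcal{R})$ being a clonoid from $A$ to $\algop{\{0,1\}}{\wedge,\vee}$: namely, closure under taking minors of functions, and, for each arity $k$, being a subuniverse of the power $\algop{\{0,1\}}{\wedge,\vee}^{A^{k}}$. Both will reduce to a direct correspondence between set-theoretic operations on subsets of $A^{n}$ and pointwise Boolean operations on their characteristic functions.

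First I will handle the minor condition. The paper already records the observation that if $S \subseteq A^{m}$ is the minor of $T \subseteq A^{n}$ via $\sigma:\{1,\dots,n\}\to\{1,\dots,m\}$, then $\mathbf{1}_{S}(x_{1},\dots,x_{m}) = \mathbf{1}_{T}(x_{\sigma(1)},\dots,x_{\sigma(n)})$, so $\mathbf{1}_{S}$ is the minor of $\mathbf{1}_{T}$ via the same $\sigma$. Hence whenever $f \in \C(\mathcal{R})$ arises as $\mathbf{1}_{T}$ with $T \in \mathcal{R}$, any minor $g$ of $f$ equals $\mathbf{1}_{S}$ for a corresponding minor $S$ of $T$; and $S \in \mathcal{R}$ by the hypothesis that $\mathcal{R}$ is closed under taking minors of sets. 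Therefore $g \in \C(\mathcal{R})$.

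Next I will verify the subuniverse condition for each $k \in \N$. A subuniverse of $\algop{\{0,1\}}{\wedge,\vee}^{A^{k}}$ is, by definition, a subset of $\{0,1\}^{A^{k}}$ closed under pointwise $\wedge$ and pointwise $\vee$. For $S,T \subseteq A^{k}$ I will use the elementary identities
\[
  \mathbf{1}_{S\cap T} \;=\; \mathbf{1}_{S} \wedge \mathbf{1}_{T}, \qquad
  \mathbf{1}_{S\cup T} \;=\; \mathbf{1}_{S} \vee \mathbf{1}_{T},
\]
where the right-hand sides are evaluated pointwise on $A^{k}$. Given $f_{1},f_{2} \in \C(\mathcal{R})^{[k]}$, write $f_{i}=\mathbf{1}_{T_{i}}$ for some $T_{i} \in \mathcal{R}$ with $T_{i} \subseteq A^{k}$; then $f_{1}\wedge f_{2} = \mathbf{1}_{T_{1}\cap T_{2}}$ and $f_{1}\vee f_{2} = \mathbf{1}_{T_{1}\cup T_{2}}$, and closure of $\mathcal{R}$ under finite intersections and unions places $T_{1}\cap T_{2}$ and $T_{1}\cup T_{2}$ in $\mathcal{R}$, hence both Boolean combinations lie in $\C(\mathcal{R})^{[k]}$.

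There is no real obstacle: the proof is a translation between the lattice of subsets of $A^{k}$ and the lattice $\{0,1\}^{A^{k}}$ via the characteristic-function bijection, together with the already-noted compatibility between set minors and function minors. The only thing worth stating explicitly is that hypotheses on $\mathcal{R}$ are used once each---intersections for $\wedge$, unions for $\vee$, set minors for function minors---so the correspondence is tight and nothing is wasted. Finiteness of $A$ plays no active role in the argument; it is merely part of the ambient framework under which the notion of clonoid is being used.
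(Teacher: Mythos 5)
Your proof is correct and follows exactly the route the paper intends: the paper states only the key observation that set minors correspond to function minors under $T \mapsto \mathbf{1}_T$ and leaves the rest implicit, while you additionally spell out the identities $\mathbf{1}_{S\cap T}=\mathbf{1}_S\wedge\mathbf{1}_T$ and $\mathbf{1}_{S\cup T}=\mathbf{1}_S\vee\mathbf{1}_T$ that give the subuniverse condition. Your remark that finiteness of $A$ is not actually used here is also accurate; it is needed only for the later application of Sparks's theorem.
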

We call $\C (\mathcal{R})$ the \emph{characteristic clonoid} of $\mathcal{R}$.

\section{Recovering definable sets from those of bounded arity}
Our main tool is a consequence of A.\ Sparks's description of clonoids
from a finite set into an algebra with a near-unanimity term. We note
that a lattice has the near-unanimity (or majority) term
$(x \land y) \lor (x \land z) \lor (y \land z)$, and hence
as a consequence of \cite[Theorem~2.1]{Sp:OTNO}, a clonoid from a finite set $A$ into
the two element lattice is generated by its $|A|^2$-ary members.
\begin{lemma}[{\cite[Theorem~2.1]{Sp:OTNO}}] \label{teor:clonoids_charcherized_by_A_square}
  Let $A$ be a finite set, let $\ab{B} := (\{0,1\},\wedge, \vee)$ be the two element lattice,
  and let $C,D$ be two clonoids from $A$ to $\ab{B}$.
  Then $C=D$ if and only if $C\ari{|A|^{2}}=D\ari{|A|^{2}}$. 
\end{lemma}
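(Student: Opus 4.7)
The plan is to deduce the lemma directly from Sparks's theorem \cite[Theorem~2.1]{Sp:OTNO} by specialising it to the two-element lattice. Only the implication $C\ari{|A|^{2}}=D\ari{|A|^{2}}\Rightarrow C=D$ needs work; the converse is immediate from the definition of $C\ari{|A|^{2}}$ and $D\ari{|A|^{2}}$.

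First, I would record that $\ab{B}=(\{0,1\},\wedge,\vee)$ carries the ternary near-unanimity (majority) term
\[
  m(x,y,z) = (x\wedge y)\vee(x\wedge z)\vee(y\wedge z),
\]
since the lattice identities yield $m(x,x,y)=m(x,y,x)=m(y,x,x)=x$. Thus $\ab{B}$ satisfies the hypotheses of Sparks's theorem with $k=3$, as the paragraph preceding the lemma already anticipates.

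Next I would apply Sparks's result in the form: for a finite set $A$ and an algebra $\ab{B}$ with a $k$-ary near-unanimity term, every clonoid from $A$ to $\ab{B}$ is generated (via minors and componentwise application of $\ab{B}$-term operations in $\ab{B}^{A^{n}}$) by its members of arity $|A|^{k-1}$. Specialising to $k=3$ gives generation by the $|A|^{2}$-ary part, so $C$ and $D$ are both equal to the clonoid generated by this common set $C\ari{|A|^{2}}=D\ari{|A|^{2}}$, and therefore $C=D$.

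The only real step is matching the hypotheses of Sparks's theorem; the genuine combinatorial content (a Baker--Pixley-style reconstruction of a function of arbitrary arity from its restrictions of arity $|A|^{k-1}$ by repeated application of the near-unanimity term) is imported as a black box and not reproved. There is no substantial obstacle beyond identifying the majority term and the exponent $k-1=2$.
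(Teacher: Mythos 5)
Your proposal is correct and follows the same route the paper takes: the paper offers no independent proof of this lemma, but justifies it in the preceding paragraph exactly as you do, by noting that the two-element lattice has the majority term $(x\wedge y)\vee(x\wedge z)\vee(y\wedge z)$ and then importing Sparks's theorem to conclude that every clonoid into it is generated by its $|A|^{2}$-ary part. Your identification of the exponent $k-1=2$ and the reduction of the equivalence to the generation statement match the paper's intent exactly.
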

We apply this result to the characteristic clonoids of some
$\Phi$-definable sets and obtain:
\begin{theorem}\label{teor:fin_many_class_closed_under_unio_int_Sc}
  Let $\mathbf{A}_{1}$  and $\mathbf{A}_{2}$ be two first order structures on a finite set $A$.
  For each $i \in \{1,2\}$, let $\Phi_{i}$ be a set of first order formulas in the language of $\mathbf{A}_{i}$
  that is closed under $\wedge$, $\vee$, and taking minors of formulas. 
Then $\Def(\mathbf{A}_{1},\Phi_{1})=\Def(\mathbf{A}_{2},\Phi_{2})$ if and only if $\Def\ari{|A|^{2}} (\mathbf{A}_{1},\Phi_{1})=\Def\ari{|A|^{2}} (\mathbf{A}_{2},\Phi_{2})$.  
\end{theorem}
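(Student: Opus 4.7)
The forward implication is immediate, since $\Def^{[|A|^{2}]}(\mathbf{A}_i,\Phi_i)$ is just the $|A|^{2}$-ary part of $\Def(\mathbf{A}_i,\Phi_i)$. The content is in the converse direction, and the plan is to translate the equality of definable sets into an equality of clonoids of Boolean-valued functions, where Lemma~\ref{teor:clonoids_charcherized_by_A_square} does all the work.

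First, I would apply Proposition~\ref{prop:connection_M_Sc} to each $i \in \{1,2\}$ to conclude that the collection $\mathcal{R}_i := \Def(\mathbf{A}_i,\Phi_i)$ of finitary relations on $A$ is closed under finite intersections, finite unions, and taking minors of sets. Then Proposition~\ref{prop:closed_under_Sc_union_intersection_implies_clonoid} gives that
\[
   C_i := \C(\mathcal{R}_i) = \{\mathbf{1}_T \mid T \in \Def(\mathbf{A}_i,\Phi_i)\}
\]
is a clonoid from $A$ to the two element lattice $\ab{B} = (\{0,1\},\wedge,\vee)$.

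Next, I would observe that for every $k \in \N$ the $k$-ary slice of $C_i$ is precisely
\[
   C_i^{[k]} = \{\mathbf{1}_T \mid T \in \Def^{[k]}(\mathbf{A}_i,\Phi_i)\},
\]
because $\mathbf{1}_T \in \{0,1\}^{A^k}$ if and only if $T \subseteq A^k$. Consequently the hypothesis $\Def^{[|A|^{2}]}(\mathbf{A}_1,\Phi_1) = \Def^{[|A|^{2}]}(\mathbf{A}_2,\Phi_2)$ is equivalent to $C_1^{[|A|^{2}]} = C_2^{[|A|^{2}]}$. By Lemma~\ref{teor:clonoids_charcherized_by_A_square}, this forces $C_1 = C_2$. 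Since $T \mapsto \mathbf{1}_T$ is a bijection from relations to their characteristic functions, $C_1 = C_2$ yields $\Def(\mathbf{A}_1,\Phi_1) = \Def(\mathbf{A}_2,\Phi_2)$, as desired.

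The proof is essentially a bookkeeping argument that the previously established machinery already assembles into the claim; there is no serious obstacle, as all nontrivial work has been absorbed into Lemma~\ref{teor:clonoids_charcherized_by_A_square} (which in turn relies on Sparks's theorem and the fact that $\ab{B}$ carries a majority term). The only small point to verify carefully is the equivalence of the hypothesis with $C_1^{[|A|^2]} = C_2^{[|A|^2]}$, for which the injectivity of $T \mapsto \mathbf{1}_T$ on subsets of $A^{|A|^2}$ suffices.
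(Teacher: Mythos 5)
Your proposal is correct and follows essentially the same route as the paper's own proof: pass to the characteristic clonoids via Propositions~\ref{prop:connection_M_Sc} and~\ref{prop:closed_under_Sc_union_intersection_implies_clonoid}, apply Lemma~\ref{teor:clonoids_charcherized_by_A_square}, and translate back using the bijectivity of $T \mapsto \mathbf{1}_T$. Your explicit remark that $C_i^{[k]}$ corresponds exactly to $\Def^{[k]}(\mathbf{A}_i,\Phi_i)$ is the same observation the paper makes implicitly in its final sentence.
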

\begin{proof}
  Let $\mathcal{R}:=\Def(\mathbf{A}_{1},\Phi_{1})$ and $\mathcal{S}:=\Def(\mathbf{A}_{2},\Phi_{2})$.
  By  Proposition~\ref{prop:connection_M_Sc}, $\mathcal{R}$ and $\mathcal{S}$ are
  closed under finite intersections, finite unions, and taking minors of sets. Therefore,
  by Proposition~\ref{prop:closed_under_Sc_union_intersection_implies_clonoid},
  their characteristic clonoids $C (\mathcal{R})$ and $C(\mathcal{S})$ are clonoids
  from $A$ into the two element lattice.
  Clearly, $\mathcal{R}=\mathcal{S}$ if and only if $\C(\mathcal{R})=\C(\mathcal{S})$.
  By Lemma~\ref{teor:clonoids_charcherized_by_A_square}, the last equality holds
  if and only if $\C(\mathcal{R})\ari{|A|^{2}}=\C(\mathcal{S})\ari{|A|^{2}}$,
  which is equivalent to $\Def\ari{|A|^{2}} (\ab{A}_1, \Phi_1) =
                          \Def\ari{|A|^{2}} (\ab{A}_2, \Phi_2)$.
\end{proof}
From this result, it follows that
an arbitrary set of formulas closed under $\wedge, \vee$, and taking minors can
sometimes be replaced by a set of equal ``expressive power'' that
consists only of disjunctions of conjunctions
of atomic formulas:
\begin{corollary}
  Let $\mathbf{A}$ be a finite first order structure, let $\Phi$ be a set of first order formulas in the language of $\mathbf{A}$ that is  closed under $\wedge$, $\vee$, and taking minors,
  and let $m := |A|^2$. We choose a finite set $I$ and a family $(S_i)_{i \in I}$ of subsets
  of $A^m$ such that
    $\{S_{i}\mid i\in I\} = \Def\arii{m}(\mathbf{A},\Phi)$,  and we 
let $(\sigma_{i})_{i\in I}$ be a family of relational symbols of arity $m$.
For each $i \in I$, let $\sigma_i^{\ab{A}'} := S_i$, and let
$\ab{A}'$ be the relational structure $\algop{A}{(\sigma_{i}^{\ab{A}'})_{i\in I}}$.
Let $\Psi$ be the closure of $\{\sigma_{i} (x_1,\ldots, x_m) \mid i\in I\}$ under taking minors of formulas, $\wedge$,
and $\vee$.
Then
\(
\Def(\mathbf{A},\Phi)=\Def(\ab{A}', \Psi).
\) 
\end{corollary}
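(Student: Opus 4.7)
The plan is to reduce the claim to Theorem~\ref{teor:fin_many_class_closed_under_unio_int_Sc} applied with $m := |A|^2$. Since $\Phi$ is already closed under $\wedge$, $\vee$, and minors by hypothesis, and $\Psi$ is closed under these operations by construction, it suffices to verify the single equality $\Def\ari{m}(\ab{A}, \Phi) = \Def\ari{m}(\ab{A}', \Psi)$; the corollary then follows immediately. Note that $\Def\ari{m}(\ab{A}, \Phi)$ is automatically finite (as $A$ is finite), so one can indeed choose the indexing family $(S_i)_{i \in I}$ as in the statement.

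For the inclusion $\Def\ari{m}(\ab{A}, \Phi) \subseteq \Def\ari{m}(\ab{A}', \Psi)$, I would simply observe that for each $i \in I$ the atomic formula $\sigma_i(x_1, \ldots, x_m)$ belongs to $\Psi$ and defines the relation $S_i$ in $\ab{A}'$, so every $S_i$ is $\Psi$-definable. Since the $S_i$ exhaust $\Def\ari{m}(\ab{A}, \Phi)$, this direction is immediate.

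The reverse inclusion $\Def\ari{m}(\ab{A}', \Psi) \subseteq \Def\ari{m}(\ab{A}, \Phi)$ is the heart of the proof. Here I would use Proposition~\ref{prop:connection_M_Sc} applied to $\ab{A}$: the set $\Def(\ab{A},\Phi)$ is closed under finite intersections, finite unions, and taking minors of sets, and it already contains each $S_i$. By structural induction on the construction of $\Psi$ from the atomic formulas $\{\sigma_i(x_1, \ldots, x_m) \mid i \in I\}$ (with the minor step for formulas corresponding, via the substitution lemma, to the minor operation on the defined sets), every $\psi \in \Psi$ defines in $\ab{A}'$ a set that lies in $\Def(\ab{A}, \Phi)$. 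Restricting to arity $m$ yields $\Def\ari{m}(\ab{A}', \Psi) \subseteq \Def\ari{m}(\ab{A}, \Phi)$.

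The main conceptual point — really the only non-bookkeeping step — is the structural induction: one must match each formula-level operation ($\wedge$, $\vee$, and substitution of variables) with the corresponding relation-level operation (intersection, union, minor) on the defined sets. Once this correspondence is in place, combining the two inclusions gives $\Def\ari{m}(\ab{A}, \Phi) = \Def\ari{m}(\ab{A}', \Psi)$, and Theorem~\ref{teor:fin_many_class_closed_under_unio_int_Sc} completes the proof.
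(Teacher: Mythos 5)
Your proposal is correct and follows essentially the same route as the paper: both reduce the corollary to the single equality $\Def\arii{m}(\ab{A},\Phi)=\Def\arii{m}(\ab{A}',\Psi)$ (the easy inclusion being handled identically via the atomic formulas $\sigma_i(x_1,\ldots,x_m)$) and then invoke Theorem~\ref{teor:fin_many_class_closed_under_unio_int_Sc}. The only inessential difference lies in the nontrivial inclusion, where the paper rewrites $\psi\in\Psi$ in a normal form $\bigwedge_k\bigvee_l\sigma_{i(k,l)}(x_{\tau(k,l,1)},\ldots,x_{\tau(k,l,m)})$ and substitutes the defining formulas $\varphi_i\in\Phi$ for the symbols $\sigma_i$ to exhibit an explicit formula of $\Phi$ defining the same set, whereas you argue semantically by structural induction on the closure generating $\Psi$, using that $\Def(\ab{A},\Phi)$ is closed under intersections, unions, and minors by Proposition~\ref{prop:connection_M_Sc}; the two arguments are interchangeable.
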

\begin{proof}
  We first show
  \begin{equation} \label{eq:psi}
    \Def\arii{m} (\ab{A}', \Psi) = \{S_i \mid i \in I\}.
  \end{equation}
  For $\supseteq$, we observe that $S_i$ is definable
  by the formula $\sigma_i (x_1,\ldots, x_m)$.
  For $\subseteq$, we choose a set $T \subseteq A^m$
  that is definable by $\psi \in \Psi$ with
  free variables $x_1, \ldots, x_m$. Then $\psi$ is equivalent
  to a formula $\psi' = \bigwedge_{k \in K} \bigvee_{l \in L} \sigma_{i (k,l)} (x_{\tau (k,l,1)},\ldots, x_{\tau (k,l,m)})$ with $i(k,l) \in I$ and $\tau (k,l,r) \in \{1,\ldots, m\}$ for all
  $k \in K$, $l \in L$, and $r \in \{1,\ldots, m\}$.
  For each $i \in I$, $S_i \in \Def\arii{m} (\ab{A}, \Phi)$, and therefore
  there is a formula $\varphi_i \in \Phi$ such that
  $S_i$ is definable by $\varphi_i$.
  Now $$\varphi' := \bigwedge_{k \in K} \bigvee_{l \in L} \varphi_{i (k,l)} (x_{\tau (k,l,1)},\ldots,
  x_{\tau (k,l,m)})$$ is a formula in $\Phi$ that defines $T$.
  Hence $T \in \Def\arii{m} (\ab{A}, \Phi) = \{S_i \mid i \in I\}$, which completes
  the proof of~\eqref{eq:psi}. Therefore
  $\Def\arii{m} (\ab{A}', \Psi) = \Def\arii{m} (\ab{A}, \Phi)$. Applying
  Theorem~\ref{teor:fin_many_class_closed_under_unio_int_Sc}
  to $\ab{A}_1 := \ab{A}$, $\Phi_1 := \Phi$, $\ab{A}_2 := \ab{A}'$,
  $\Phi_2 := \Psi$, we obtain $\Def(\mathbf{A},\Phi)=\Def(\ab{A}', \Psi)$.
\end{proof}

\section{Inequivalent algebras}
Some of the finiteness results from universal algebraic geometry
from \cite{pinus17, pinus17logeq} can be viewed as consequences
of Theorem \ref{teor:fin_many_class_closed_under_unio_int_Sc}.
Let $\ab{A}$ be an algebraic structure.
It is easy to see the the set of first order formulas consisting of all primitive positive formulas and the set of all finite conjunctions of atomic formulas in the language of $\ab{A}$ are both closed under $\wedge$ and taking minors, but not under $\vee$. Therefore it is impossible to apply Theorem \ref{teor:fin_many_class_closed_under_unio_int_Sc} to all relational clones or to the algebraic geometry of every finite
universal algebra. Hence we first restrict ourselves to \emph{equational domains}; in these algebras,
the union
of two algebraic sets is again algebraic.
For example, every field is an equational domain.
\begin{corollary}[{\cite[Theorem~3]{pinus17}}]
  Let $A$ be a finite set, and let
  $(\ab{A}_i)_{i \in I}$ be a family of pairwise algebraically inequivalent
  equational domains with universe $A$. Then $I$ is finite. 
\end{corollary}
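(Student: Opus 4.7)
The plan is to apply Theorem~\ref{teor:fin_many_class_closed_under_unio_int_Sc} to a suitably chosen family of formula sets that captures algebraic sets, and then observe that there are only finitely many possibilities for the $|A|^{2}$-ary definable sets.

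For each equational domain $\ab{A}_i$, I would take $\Phi_i$ to be the set of all finite disjunctions of finite conjunctions of atomic formulas in the language of $\ab{A}_i$. This $\Phi_i$ is closed under $\vee$ and under taking minors of formulas by construction, and it is closed under $\wedge$ by distributivity of $\wedge$ over $\vee$. In particular, the hypotheses of Theorem~\ref{teor:fin_many_class_closed_under_unio_int_Sc} are satisfied for every pair $(\ab{A}_i,\Phi_i)$, $(\ab{A}_j,\Phi_j)$.

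Next I would verify that $\Def(\ab{A}_i,\Phi_i)$ equals the set of algebraic sets of $\ab{A}_i$. For ``$\supseteq$'', each algebraic set is defined by some, a priori infinite, system of term equations; since $A$ is finite, there are only finitely many $n$-ary operations $A^n \to A$, hence this system is equivalent to a finite subsystem whose conjunction is a member of $\Phi_i$. For ``$\subseteq$'', each atomic formula $s \approx t$ defines an algebraic set; a finite conjunction of such defines an intersection of algebraic sets, which is algebraic by concatenating the defining systems; and a disjunction defines a union of algebraic sets, which is again algebraic precisely because $\ab{A}_i$ is an equational domain.

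With this identification, $\ab{A}_i$ and $\ab{A}_j$ are algebraically equivalent iff $\Def(\ab{A}_i,\Phi_i) = \Def(\ab{A}_j,\Phi_j)$, and by Theorem~\ref{teor:fin_many_class_closed_under_unio_int_Sc} this is equivalent to $\Def\arii{|A|^{2}}(\ab{A}_i,\Phi_i) = \Def\arii{|A|^{2}}(\ab{A}_j,\Phi_j)$. Since each $\Def\arii{|A|^{2}}(\ab{A}_i,\Phi_i)$ is a subset of the finite set $\mathcal{P}(A^{|A|^{2}})$, only finitely many such subsets can occur, and hence there are only finitely many algebraic equivalence classes of equational domains on $A$, proving that $I$ is finite. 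The main obstacle is the careful verification that $\Def(\ab{A}_i,\Phi_i)$ equals the family of algebraic sets: the ``$\supseteq$'' direction requires the finiteness of $A$ to collapse infinite equation systems to finite ones, while the ``$\subseteq$'' direction relies critically on the equational domain hypothesis to absorb the disjunctions.
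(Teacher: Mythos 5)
Your proposal is correct and takes essentially the same route as the paper: there, $\Phi_i$ is the set of finite conjunctions of atomic formulas, one passes to its $\wedge,\vee$-closure $\Phi_i'$, uses the equational domain hypothesis (induction on the number of $\vee$) to show that the definable sets do not change, and then applies Theorem~\ref{teor:fin_many_class_closed_under_unio_int_Sc} together with the finiteness of $\mathcal{P}(\mathcal{P}(A^{|A|^2}))$. The only cosmetic difference is that you work with disjunctive normal forms directly, so your $\Phi_i$ is closed under $\wedge$ only up to logical equivalence rather than syntactically (the paper's $\Phi_i'$ is literally closed), but this does not affect the argument.
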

\begin{proof}
  Let $m := |A|^2$.
  For each $i \in I$, we  let $\Phi_i$ be the set of finite conjunctions of atomic formulas
  of $\ab{A}_i$, and we let $\Phi_i'$ be the smallest set of
  formulas that contains $\Phi_i$ and is closed under $\land$ and $\lor$.
  Since $\ab{A}_{i}$ is an equational domain, we can use induction on
  the number of $\lor$ in a formula $\varphi' \in \Phi_i'$ to show
  that the subset defined by $\varphi'$ is algebraic and hence
  lies in $\Def(\mathbf{A}_{i}, \Phi_i)$. Hence $\Def(\mathbf{A}_{i}, \Phi_i) = \Def(\mathbf{A}_{i}, \Phi_i')$.
  We will now show that the mapping
  $\alpha : I \to \mathcal{P} (\mathcal{P} (A^m))$,
  $\alpha(i) := \Def\arii{m} (\mathbf{A}_{i}, \Phi_i')$, is injective.
  Suppose $i,j \in I$ are such that $\alpha (i) = \alpha(j)$.
  Then $\Def\arii{m} (\mathbf{A}_{i}, \Phi_i') = \Def\arii{m} (\mathbf{A}_{j}, \Phi_j')$,
  and thus by
  Theorem~\ref{teor:fin_many_class_closed_under_unio_int_Sc},
  $\Def(\mathbf{A}_{i}, \Phi_i') = \Def(\mathbf{A}_{j}, \Phi_j')$, which implies
  $\Def(\mathbf{A}_{i}, \Phi_i) = \Def(\mathbf{A}_{j}, \Phi_j)$. Hence $\ab{A}_i$ and
  $\ab{A}_j$ are algebraically equivalent, which implies $i = j$.
  Thus $\alpha$ is injective, and therefore $I \le 2^{2^{|A|^{|A|^2}}}$.
 \end{proof}  
The set of first order formulas consisting of all quantifier free formulas is closed under
$\wedge$, $\vee$, and taking minors. Hence from Theorem~\ref{teor:fin_many_class_closed_under_unio_int_Sc},
we also obtain:
\begin{corollary}[{\cite[Theorem~1]{pinus17logeq}}]
The number of pairwise $L_0$-logically inequivalent algebras on a finite set $A$ is finite.
\end{corollary}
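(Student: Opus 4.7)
The plan is to mirror the argument used for the previous corollary, replacing the role of algebraic sets by the $L_0$-logical geometry. Since the set $\Phi_i'$ of quantifier free formulas in the language of $\ab{A}_i$ is already closed under $\wedge$, $\vee$, and taking minors of formulas, no auxiliary closure step (analogous to passing from $\Phi_i$ to $\Phi_i'$ in the equational domain case) is needed; Theorem~\ref{teor:fin_many_class_closed_under_unio_int_Sc} applies directly to the pairs $(\ab{A}_i, \Phi_i')$.

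Concretely, let $(\ab{A}_i)_{i \in I}$ be a family of pairwise $L_0$-logically inequivalent algebras with universe $A$, set $m := |A|^2$, and for each $i \in I$ let $\Phi_i'$ denote the set of quantifier free formulas in the language of $\ab{A}_i$. I would define the map
\[
  \alpha : I \to \mathcal{P}(\mathcal{P}(A^m)), \qquad \alpha(i) := \Def\arii{m}(\ab{A}_i, \Phi_i'),
\]
and verify injectivity as follows: if $\alpha(i) = \alpha(j)$, then by Theorem~\ref{teor:fin_many_class_closed_under_unio_int_Sc} (whose hypotheses are met because quantifier free formulas are closed under $\wedge$, $\vee$, and taking minors) we conclude $\Def(\ab{A}_i, \Phi_i') = \Def(\ab{A}_j, \Phi_j')$. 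By the definition of $L_0$-logical equivalence this forces $\ab{A}_i$ and $\ab{A}_j$ to be $L_0$-logically equivalent, hence $i = j$.

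Since the codomain of $\alpha$ has cardinality $2^{2^{|A|^m}} = 2^{2^{|A|^{|A|^2}}}$, injectivity of $\alpha$ bounds $|I|$ by this finite number, completing the proof. There is no real obstacle here: the only point that requires care is observing that quantifier free formulas are closed under $\wedge$, $\vee$, and taking minors (which is immediate from the inductive definition of quantifier free formulas together with the fact that substitution of variables preserves quantifier freeness), so the machinery of Theorem~\ref{teor:fin_many_class_closed_under_unio_int_Sc} is directly applicable without the detour through an equational domain argument that was needed in the previous corollary.
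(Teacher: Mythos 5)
Your proof is correct and follows exactly the route the paper intends: the paper derives this corollary directly from Theorem~\ref{teor:fin_many_class_closed_under_unio_int_Sc} by noting that quantifier free formulas are closed under $\wedge$, $\vee$, and taking minors, and your injective map $\alpha(i) = \Def\arii{|A|^2}(\ab{A}_i,\Phi_i')$ is the same counting argument used for the preceding corollary. Nothing is missing.
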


\section*{Acknowledgements}
A part of this work was done while the first listed author was visiting the
University of Siena. The authors thank Paolo Aglian\`o and Stefano
Fioravanti for their kind support. We also thank
Peter Mayr and
Tam\'{a}s Waldhauser for discussions on the topic of this
note.

\bibliographystyle{amsalpha}
\bibliography{airo11}
\end{document}